\newtheorem{thm}{Theorem}[section]
\newtheorem{lem}[thm]{Lemma}
\theoremstyle{definition}
\theoremstyle{remark}
\theoremstyle{plain}
\theoremstyle{remark}
\newtheorem*{example}{Example}
\numberwithin{equation}{section}
\begin{document}

\title{ \textbf{Enumeration of some particular  $ 2n\times 9 $  n-times  Persymmetric  Matrices over $\mathbb{F}_{2} $ by rank}}
\author{Jorgen~Cherly}
\address{D\'epartement de Math\'ematiques, Universit\'e de
    Brest, 29238 Brest cedex~3, France}
\email{Jorgen.Cherly@univ-brest.fr}
\email{andersen69@wanadoo.fr}

\maketitle 
\begin{abstract}
Dans cet article nous comptons le nombre de certaines $2n\times 9$  n-fois matrices persym\' etriques de rang i sur $ \mathbb {F} _ {2} . $

 \end{abstract}

\selectlanguage{english}

\begin{abstract}  In this paper we count the number of some particular $2n\times 9$ n-times persymmetric rank i matrices over  $ \mathbb{F}_{2}.$
 \end{abstract}
 
   \maketitle 
\newpage
\tableofcontents
\newpage

   \section{Introduction.}
  \label{sec 1}  
  In this paper we propose to compute  the number $ \Gamma_{i}^{\left[2\atop{\vdots \atop 2}\right]\times 9}$  of rank i  $2n\times 9 $
   n-times persymmetric matrices over $\mathbb{F}_{2}$ of the below form for $0\leqslant i\leqslant \inf(2n,9) $  \\

    \begin{equation}
  \label{eq 3.1}
   \left (  \begin{array} {ccccccccc}
\alpha  _{1}^{(1)} & \alpha  _{2}^{(1)}  &   \alpha_{3}^{(1)} &   \alpha_{4}^{(1)} &   \alpha_{5}^{(1)} &  \alpha_{6}^{(1)}  & \alpha_{7}^{(1)}  & \alpha_{8}^{(1)} & \alpha_{9}^{(1)} \\
\alpha  _{2}^{(1)} & \alpha  _{3}^{(1)}  &   \alpha_{4}^{(1)} &   \alpha_{5}^{(1)} &   \alpha_{6}^{(1)} &  \alpha_{7}^{(1)} & \alpha_{8}^{(1)}  &  \alpha_{9}^{(1)}& \alpha_{10}^{(1)} \\ 
\hline \\
\alpha  _{1}^{(2)} & \alpha  _{2}^{(2)}  &   \alpha_{3}^{(2)} &   \alpha_{4}^{(2)} &   \alpha_{5}^{(2)} &  \alpha_{6}^{(2)} &  \alpha_{7}^{(2)}  & \alpha_{8}^{(2)} & \alpha_{9}^{(2)} \\
\alpha  _{2}^{(2)} & \alpha  _{3}^{(2)}  &   \alpha_{4}^{(2)} &   \alpha_{5}^{(2)}&   \alpha_{6}^{(2)} &  \alpha_{7}^{(2)}  &  \alpha_{8}^{(2)} &  \alpha_{9}^{(2)} &\alpha_{10}^{(2)} \\ 
\hline\\
\alpha  _{1}^{(3)} & \alpha  _{2}^{(3)}  &   \alpha_{3}^{(3)}  &   \alpha_{4}^{(3)} &   \alpha_{5}^{(3)} &  \alpha_{6}^{(3)} &  \alpha_{7}^{(3)} &  \alpha_{8}^{(3)} &\alpha_{9}^{(3)} \\
\alpha  _{2}^{(3)} & \alpha  _{3}^{(3)}  &   \alpha_{4}^{(3)}&   \alpha_{5}^{(3)} &   \alpha_{6}^{(3)}  &  \alpha_{7}^{(3)} & \alpha_{8}^{(3)}  &   \alpha_{9}^{(3)}&\alpha_{10}^{(3)} \\ 
\hline \\
\vdots & \vdots & \vdots  & \vdots  & \vdots & \vdots  & \vdots & \vdots  & \vdots \\
\hline \\
\alpha  _{1}^{(n)} & \alpha  _{2}^{(n)}  &   \alpha_{3}^{(n)} &   \alpha_{4}^{(n)} &   \alpha_{5}^{(n)}  &  \alpha_{6}^{(n)} &  \alpha_{7}^{(n)}  &  \alpha_{8}^{(n)}& \alpha_{9}^{(n)} \\
\alpha  _{2}^{(n)} & \alpha  _{3}^{(n)}  &   \alpha_{4}^{(n)}&   \alpha_{5}^{(n)} &   \alpha_{6}^{(n)}  &  \alpha_{7}^{(n)} &  \alpha_{8}^{(n)}  &   \alpha_{9}^{(n)}&\alpha_{10}^{(n)} \\ 
\end{array} \right )  
\end{equation} 
We remark that this paper is based on the results in  the author's papers [11,12].

   \section{Some notations concerning the field of Laurent Series $ \mathbb{F}_{2}((T^{-1})) $. }
  \label{sec 2}  
  We denote by $ \mathbb{F}_{2}\big(\big({T^{-1}}\big) \big)
 = \mathbb{K} $ the completion
 of the field $\mathbb{F}_{2}(T), $  the field of  rational fonctions over the
 finite field\; $\mathbb{F}_{2}$,\; for the  infinity  valuation \;
 $ \mathfrak{v}=\mathfrak{v}_{\infty }$ \;defined by \;
 $ \mathfrak{v}\big(\frac{A}{B}\big) = degB -degA $ \;
 for each pair (A,B) of non-zero polynomials.
 Then every element non-zero t in
  $\mathbb{F}_{2}\big(\big({\frac{1}{T}}\big) \big) $
 can be expanded in a unique way in a convergent Laurent series
                              $  t = \sum_{j= -\infty }^{-\mathfrak{v}(t)}t_{j}T^j
                                 \; where\; t_{j}\in \mathbb{F}_{2}. $\\
  We associate to the infinity valuation\; $\mathfrak{v}= \mathfrak{v}_{\infty }$
   the absolute value \; $\vert \cdot \vert_{\infty} $\; defined by \;
  \begin{equation*}
  \vert t \vert_{\infty} =  \vert t \vert = 2^{-\mathfrak{v}(t)}. \\
\end{equation*}
    We denote  E the  Character of the additive locally compact group
$  \mathbb{F}_{2}\big(\big({\frac{1}{T}}\big) \big) $ defined by \\
\begin{equation*}
 E\big( \sum_{j= -\infty }^{-\mathfrak{v}(t)}t_{j}T^j\big)= \begin{cases}
 1 & \text{if      }   t_{-1}= 0, \\
  -1 & \text{if      }   t_{-1}= 1.
    \end{cases}
\end{equation*}
  We denote $\mathbb{P}$ the valuation ideal in $ \mathbb{K},$ also denoted the unit interval of  $\mathbb{K},$ i.e.
  the open ball of radius 1 about 0 or, alternatively, the set of all Laurent series 
   $$ \sum_{i\geq 1}\alpha _{i}T^{-i}\quad (\alpha _{i}\in  \mathbb{F}_{2} ) $$ and, for every rational
    integer j,  we denote by $\mathbb{P}_{j} $
     the  ideal $\left\{t \in \mathbb{K}|\; \mathfrak{v}(t) > j \right\}. $
     The sets\; $ \mathbb{P}_{j}$\; are compact subgroups  of the additive
     locally compact group \; $ \mathbb{K}. $\\
      All $ t \in \mathbb{F}_{2}\Big(\Big(\frac{1}{T}\Big)\Big) $ may be written in a unique way as
$ t = [t] + \left\{t\right\}, $ \;  $  [t] \in \mathbb{F}_{2}[T] ,
 \; \left\{t\right\}\in \mathbb{P}  ( =\mathbb{P}_{0}). $\\
 We denote by dt the Haar measure on \; $ \mathbb{K} $\; chosen so that \\
  $$ \int_{\mathbb{P}}dt = 1. $$\\
  
  $$ Let \quad
  (t_{1},t_{2},\ldots,t_{n} )
 =  \big( \sum_{j=-\infty}^{-\nu(t_{1})}\alpha _{j}^{(1)}T^{j},  \sum_{j=-\infty}^{-\nu(t_{2})}\alpha _{j}^{(2)}T^{j} ,\ldots, \sum_{j=-\infty}^{-\nu(t_{n})}\alpha _{j}^{(n)}T^{j}\big) \in  \mathbb{K}^{n}. $$ 
 We denote $\psi  $  the  Character on  $(\mathbb{K}^n, +) $ defined by \\
 \begin{align*}
  \psi \big( \sum_{j=-\infty}^{-\nu(t_{1})}\alpha _{j}^{(1)}T^{j},  \sum_{j=-\infty}^{-\nu(t_{2})}\alpha _{j}^{(2)}T^{j} ,\ldots, \sum_{j=-\infty}^{-\nu(t_{n})}\alpha _{j}^{(n)}T^{j}\big) & = E \big( \sum_{j=-\infty}^{-\nu(t_{1})}\alpha _{j}^{(1)}T^{j}\big) \cdot E\big( \sum_{j=-\infty}^{-\nu(t_{2})}\alpha _{j}^{(2)}T^{j}\big)\cdots E\big(  \sum_{j=-\infty}^{-\nu(t_{n})}\alpha _{j}^{(n)}T^{j}\big) \\
  & = 
    \begin{cases}
 1 & \text{if      }     \alpha _{-1}^{(1)} +    \alpha _{-1}^{(2)}  + \ldots +   \alpha _{-1}^{(n)}   = 0 \\
  -1 & \text{if      }    \alpha _{-1}^{(1)} +    \alpha _{-1}^{(2)}  + \ldots +   \alpha _{-1}^{(n)}   =1                                                                                                                          
    \end{cases}
  \end{align*}
   \section{Some results concerning  n-times persymmetric matrices over  $ \mathbb{F}_{2}$.}
  \label{sec 3}  
     $$ Set\quad
  (t_{1},t_{2},\ldots,t_{n} )
 =  \big( \sum_{i\geq 1}\alpha _{i}^{(1)}T^{-i}, \sum_{i \geq 1}\alpha  _{i}^{(2)}T^{-i},\sum_{i \geq 1}\alpha _{i}^{(3)}T^{-i},\ldots,\sum_{i \geq 1}\alpha _{i}^{(n)}T^{-i}   \big) \in  \mathbb{P}^{n}. $$

     Denote by $D^{\left[2 \atop{\vdots \atop 2}\right]\times k}(t_{1},t_{2},\ldots,t_{n} ) $
    
    the following $2n \times k $ \;  n-times  persymmetric  matrix  over the finite field  $\mathbb{F}_{2}. $ 
    
  \begin{equation}
  \label{eq 3.1}
   \left (  \begin{array} {cccccccc}
\alpha  _{1}^{(1)} & \alpha  _{2}^{(1)}  &   \alpha_{3}^{(1)} &   \alpha_{4}^{(1)} &   \alpha_{5}^{(1)} &  \alpha_{6}^{(1)}  & \ldots  &  \alpha_{k}^{(1)} \\
\alpha  _{2}^{(1)} & \alpha  _{3}^{(1)}  &   \alpha_{4}^{(1)} &   \alpha_{5}^{(1)} &   \alpha_{6}^{(1)} &  \alpha_{7}^{(1)} & \ldots  &  \alpha_{k+1}^{(1)} \\ 
\hline \\
\alpha  _{1}^{(2)} & \alpha  _{2}^{(2)}  &   \alpha_{3}^{(2)} &   \alpha_{4}^{(2)} &   \alpha_{5}^{(2)} &  \alpha_{6}^{(2)} & \ldots   &  \alpha_{k}^{(2)} \\
\alpha  _{2}^{(2)} & \alpha  _{3}^{(2)}  &   \alpha_{4}^{(2)} &   \alpha_{5}^{(2)}&   \alpha_{6}^{(2)} &  \alpha_{7}^{(2)}  & \ldots  &  \alpha_{k+1}^{(2)} \\ 
\hline\\
\alpha  _{1}^{(3)} & \alpha  _{2}^{(3)}  &   \alpha_{3}^{(3)}  &   \alpha_{4}^{(3)} &   \alpha_{5}^{(3)} &  \alpha_{6}^{(3)} & \ldots  &  \alpha_{k}^{(3)} \\
\alpha  _{2}^{(3)} & \alpha  _{3}^{(3)}  &   \alpha_{4}^{(3)}&   \alpha_{5}^{(3)} &   \alpha_{6}^{(3)}  &  \alpha_{7}^{(3)} & \ldots  &  \alpha_{k+1}^{(3)} \\ 
\hline \\
\vdots & \vdots & \vdots  & \vdots  & \vdots & \vdots  & \vdots & \vdots \\
\hline \\
\alpha  _{1}^{(n)} & \alpha  _{2}^{(n)}  &   \alpha_{3}^{(n)} &   \alpha_{4}^{(n)} &   \alpha_{5}^{(n)}  &  \alpha_{6}^{(n)} & \ldots  &  \alpha_{k}^{(n)} \\
\alpha  _{2}^{(n)} & \alpha  _{3}^{(n)}  &   \alpha_{4}^{(n)}&   \alpha_{5}^{(n)} &   \alpha_{6}^{(n)}  &  \alpha_{7}^{(n)} & \ldots  &  \alpha_{k+1}^{(n)} \\ 
\end{array} \right )  
\end{equation} 
We denote by  $ \Gamma_{i}^{\left[2\atop{\vdots \atop 2}\right]\times k}$  the number of rank i n-times persymmetric matrices over $\mathbb{F}_{2}$ of the above form :  \\

  Let $ \displaystyle  f (t_{1},t_{2},\ldots,t_{n} ) $  be the exponential sum  in $ \mathbb{P}^{n} $ defined by\\
    $(t_{1},t_{2},\ldots,t_{n} ) \displaystyle\in \mathbb{P}^{n}\longrightarrow \\
    \sum_{deg Y\leq k-1}\sum_{deg U_{1}\leq  1}E(t_{1} YU_{1})
  \sum_{deg U_{2} \leq 1}E(t _{2} YU_{2}) \ldots \sum_{deg U_{n} \leq 1} E(t _{n} YU_{n}). $\vspace{0.5 cm}\\
    Then
  $$     f_{k} (t_{1},t_{2},\ldots,t_{n} ) =
  2^{2n+k- rank\big[ D^{\left[2\atop{\vdots \atop 2}\right]\times k}(t_{1},t_{2},\ldots,t_{n} )\big] } $$

    Hence  the number denoted by $ R_{q,n}^{(k)} $ of solutions \\
  
 $(Y_1,U_{1}^{(1)},U_{2}^{(1)}, \ldots,U_{n}^{(1)}, Y_2,U_{1}^{(2)},U_{2}^{(2)}, 
\ldots,U_{n}^{(2)},\ldots  Y_q,U_{1}^{(q)},U_{2}^{(q)}, \ldots,U_{n}^{(q)}   ) \in (\mathbb{F}_{2}[T])^{(n+1)q}$ \vspace{0.5 cm}\\
 of the polynomial equations  \vspace{0.5 cm}
  \[\left\{\begin{array}{c}
 Y_{1}U_{1}^{(1)} + Y_{2}U_{1}^{(2)} + \ldots  + Y_{q}U_{1}^{(q)} = 0  \\
    Y_{1}U_{2}^{(1)} + Y_{2}U_{2}^{(2)} + \ldots  + Y_{q}U_{2}^{(q)} = 0\\
    \vdots \\
   Y_{1}U_{n}^{(1)} + Y_{2}U_{n}^{(2)} + \ldots  + Y_{q}U_{n}^{(q)} = 0 
 \end{array}\right.\]
 
    $ \Leftrightarrow
    \begin{pmatrix}
   U_{1}^{(1)} & U_{1}^{(2)} & \ldots  & U_{1}^{(q)} \\ 
      U_{2}^{(1)} & U_{2}^{(2)}  & \ldots  & U_{2}^{(q)}  \\
\vdots &   \vdots & \vdots &   \vdots   \\
U_{n}^{(1)} & U_{n}^{(2)}   & \ldots  & U_{n}^{(q)} \\
 \end{pmatrix}  \begin{pmatrix}
   Y_{1} \\
   Y_{2}\\
   \vdots \\
   Y_{q} \\
  \end{pmatrix} =   \begin{pmatrix}
  0 \\
  0 \\
  \vdots \\
  0 
  \end{pmatrix} $\\

    satisfying the degree conditions \\
                   $$  degY_i \leq k-1 ,
                   \quad degU_{j}^{(i)} \leq 1, \quad  for \quad 1\leq j\leq n,  \quad 1\leq i \leq q $$ \\
  is equal to the following integral over the unit interval in $ \mathbb{K}^{n} $
    $$ \int_{\mathbb{P}^{n}} f_{k}^{q}(t_{1},t_{2},\ldots,t_{n}) dt_{1}dt _{2}\ldots dt _{n}. $$
  Observing that $ f (t_{1},t_{2},\ldots,t_{n} ) $ is constant on cosets of $ \prod_{j=1}^{n}\mathbb{P}_{k+1} $ in $ \mathbb{P}^{n} $\;
  the above integral is equal to 
  
  \begin{equation}
  \label{eq 3.2}
 2^{q(2n+k) - (k+1)n}\sum_{i = 0}^{\inf(2n,k)}
  \Gamma_{i}^{\left[2\atop{\vdots \atop 2}\right]\times k} 2^{-iq} =  R_{q,n}^{(k)}. 
 \end{equation}
 
 \begin{eqnarray}
 \label{eq 3.3}
\text{ Recall that $ R_{q,n}^{(k)}$ is equal to the number of solutions of the polynomial system} \nonumber \\
    \begin{pmatrix}
   U_{1}^{(1)} & U_{1}^{(2)} & \ldots  & U_{1}^{(q)} \\ 
      U_{2}^{(1)} & U_{2}^{(2)}  & \ldots  & U_{2}^{(q)}  \\
\vdots &   \vdots & \vdots &   \vdots   \\
U_{n}^{(1)} & U_{n}^{(2)}   & \ldots  & U_{n}^{(q)} \\
 \end{pmatrix}  \begin{pmatrix}
   Y_{1} \\
   Y_{2}\\
   \vdots \\
   Y_{q} \\
  \end{pmatrix} =   \begin{pmatrix}
  0 \\
  0 \\
  \vdots \\
  0 
  \end{pmatrix} \\
 \text{ satisfying the degree conditions}\nonumber \\
                     degY_i \leq k-1 ,
                   \quad degU_{j}^{(i)} \leq 1, \quad  for \quad 1\leq j\leq n,   \quad 1\leq i \leq q. \nonumber
 \end{eqnarray}

 From \eqref{eq 3.2} we obtain for q = 1\\
   \begin{align}
  \label{eq 3.4}
 2^{k-(k-1)n}\sum_{i = 0}^{\inf(2n,k)}
 \Gamma_{i}^{\left[2\atop{\vdots \atop 2}\right]\times k} 2^{-i} =  R_{1,n}^{(k)} = 2^{2n}+2^k-1.
  \end{align}

We have obviously \\

   \begin{align}
  \label{eq 3.5}
 \sum_{i = 0}^{k}
 \Gamma_{i}^{\left[2\atop{\vdots \atop 2}\right]\times k}  = 2^{(k+1)n}.  
 \end{align}

From  the fact that the number of rank one persymmetric  matrices over $\mathbb{F}_{2}$ is equal to three  we obtain using
 combinatorial methods  : \\
 
    \begin{equation}
  \label{eq 3.6}
 \Gamma_{1}^{\left[2\atop{\vdots \atop 2}\right]\times k}  = (2^{n}-1)\cdot 3.
  \end{equation}
  For more details see Cherly  [11]
  
 \subsection{Computation of $ \Gamma_{6}^{\left[2\atop{\vdots \atop 2}\right]\times k} $.}
We recall (see section \ref{sec 3} ) that $ \Gamma_{6}^{\left[2\atop{\vdots \atop 2}\right]\times k}$ denotes the number of rank 6
n-times persymmetric matrices over  $\mathbf{F}_2 $  of the form \eqref{eq 3.1}
We shall need the following Lemma  : \\
  \begin{lem}
\label{lem 3.1}
\begin{equation}
\label{eq 3.7}
   \Gamma_{6}^{\left[2\atop{\vdots \atop 2}\right]\times k}=   \begin{cases}
 0 & \text{if  } n = 0,  \\  
0 & \text{if  } n = 1,   \\
0 & \text{if  } n = 2,\\
2^{3k+3}-7\cdot 2^{2k+6}+7\cdot 2^{k+10}-32768 & \text{if   } n=3,\\
120\cdot 2^{3k}+123480\cdot 2^{2k}-6142080\cdot 2^{k}+66170880  & \text{if   }  n = 4, \\
1240\cdot[ 2^{3k}+3199\cdot2^{2k}+2^7\cdot3913\cdot2^{k}-18883\cdot2^{10}] & \text{if   }  n = 5.
 \end{cases}
   \end{equation} 
   
    \begin{equation}
  \label{eq 3.8}
    \Gamma_{6}^{\left[2\atop{\vdots \atop 2}\right]\times 7}=   127\cdot 2^{6n}-189\cdot2^{5n} -7378\cdot 2^{4n}\\
    +24240\cdot 2^{3n}+35168 \cdot 2^{2n}-166656 \cdot2^{n} +114688. 
      \end{equation}
\end{lem}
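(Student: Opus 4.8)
The plan is to read both displays off the linear relation \eqref{eq 3.2}: for each fixed $q$ it writes the solution count $R_{q,n}^{(k)}$ as the ``Vandermonde'' combination $2^{q(2n+k)-(k+1)n}\sum_{i}\Gamma_{i}^{\left[2\atop{\vdots \atop 2}\right]\times k}2^{-iq}$ of the rank counts, and I invert it. The specializations $q=0$ (namely \eqref{eq 3.5}), $q=1$ (namely \eqref{eq 3.4}), and, when more equations are needed, $q=2,3,\dots$ (whose right sides $R_{q,n}^{(k)}$ are the solution counts \eqref{eq 3.3}, taken from Cherly~[11,12]) supply enough independent linear relations to isolate $\Gamma_{6}^{\left[2\atop{\vdots \atop 2}\right]\times k}$ once one substitutes the lower counts $\Gamma_{0}^{\left[2\atop{\vdots \atop 2}\right]\times k},\dots,\Gamma_{5}^{\left[2\atop{\vdots \atop 2}\right]\times k}$ and, where they occur, the top-rank counts. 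The inputs I rely on are $\Gamma_{0}=1$, $\Gamma_{1}=3(2^{n}-1)$ from \eqref{eq 3.6}, and the closed forms for $\Gamma_{2},\Gamma_{3},\Gamma_{4},\Gamma_{5}$ (polynomials in $2^{k}$ for fixed $n$, or in $2^{n}$ for fixed $k$) established in the preceding subsections and in~[11,12]. The first three lines of \eqref{eq 3.7} need no computation: a $2n\times k$ matrix has rank at most $2n$, so $\Gamma_{6}^{\left[2\atop{\vdots \atop 2}\right]\times k}=0$ for $n\le 2$.

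For the remaining lines of \eqref{eq 3.7} I treat $n=3,4,5$ in turn. For $n=3$ the rank runs over $0,\dots,6$ as soon as $k\ge 6$, so $\Gamma_{6}$ is the only unknown in \eqref{eq 3.5}: $\Gamma_{6}^{\left[2\atop{\vdots \atop 2}\right]\times k}=2^{3(k+1)}-\sum_{i=0}^{5}\Gamma_{i}^{\left[2\atop{\vdots \atop 2}\right]\times k}$, and substituting the recalled formulas and collecting powers of $2^{k}$ yields $2^{3k+3}-7\cdot2^{2k+6}+7\cdot2^{k+10}-2^{15}$. For $n=4$ (ranks $0,\dots,8$) and $n=5$ (ranks $0,\dots,10$) I again substitute the known low counts, together with the known top-rank counts or, equivalently, with the $q=2$ (and for $n=5$ also $q=3,4$) instances of \eqref{eq 3.2}; this leaves a small invertible linear system in the few remaining $\Gamma_{i}$, whose solution after simplification is the $n=4$ and $n=5$ lines.

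For \eqref{eq 3.8} I fix $k=7$. When $n\ge 4$ the rank runs over $0,\dots,7$, so after substituting $\Gamma_{0},\dots,\Gamma_{5}$ only $\Gamma_{6}^{\left[2\atop{\vdots \atop 2}\right]\times 7}$ and $\Gamma_{7}^{\left[2\atop{\vdots \atop 2}\right]\times 7}$ remain. Equation \eqref{eq 3.5} becomes $\sum_{i=0}^{7}\Gamma_{i}^{\left[2\atop{\vdots \atop 2}\right]\times 7}=2^{8n}$ and \eqref{eq 3.4} at $k=7$ becomes $\sum_{i=0}^{7}\Gamma_{i}^{\left[2\atop{\vdots \atop 2}\right]\times 7}2^{-i}=2^{8n-7}+127\cdot2^{6n-7}$; eliminating $\Gamma_{7}$ gives
\[
\Gamma_{6}^{\left[2\atop{\vdots \atop 2}\right]\times 7}=127\cdot2^{6n}+\sum_{i=0}^{5}\bigl(1-2^{7-i}\bigr)\,\Gamma_{i}^{\left[2\atop{\vdots \atop 2}\right]\times 7},
\]
and inserting the recalled formulas for $\Gamma_{0},\dots,\Gamma_{5}$ and expanding in powers of $2^{n}$ will produce the asserted polynomial, whose leading term is already visibly $127\cdot2^{6n}$. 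The elimination is valid only for $n\ge 4$; the values for $n=0,1,2$ ($=0$, by the rank bound) and $n=3$ (the $k=7$ specialization of the $n=3$ line of \eqref{eq 3.7}) are supplied separately, and I then check that the degree-$6$ polynomial in $2^{n}$ obtained above reproduces all of them — in particular it must, and does, vanish at $n=0,1,2$.

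The real content lies not in this linear algebra but in the material it consumes: the previously established closed forms for $\Gamma_{2},\Gamma_{3},\Gamma_{4},\Gamma_{5}$, and (for $n=4,5$) the top-rank counts and the solution counts $R_{q,n}^{(k)}$, carry all the genuine difficulty and I will simply cite them. What is left is bookkeeping, but heavy bookkeeping: the substitutions and cancellations are long, and matching the subleading coefficients and constant terms — the $-32768$, the $66170880$, the $114688$, the overall factors $120$ and $1240$ — is where errors are most likely, so I will cross-check every line against \eqref{eq 3.5} and against one small explicit case ($k=6$, or $n=3$) before recording it.
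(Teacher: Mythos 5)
The paper offers no argument at all for this lemma --- its ``proof'' is the single sentence that the formulas follow from Cherly [11,12] --- so you are attempting something genuinely more ambitious: an internal derivation from the moment identities \eqref{eq 3.4}, \eqref{eq 3.5} and the lower rank counts. Much of it does work. The vanishing for $n\le 2$ is immediate from the rank bound. For $n=3$ the single relation \eqref{eq 3.5} does isolate $\Gamma_{6}$, and substituting the $k$-formulas for $\Gamma_{0},\dots,\Gamma_{5}$ (which are in Lemma \ref{lem 3.3}, quoted from [12], not ``in the preceding subsections'' as you say) reproduces $2^{3k+3}-7\cdot2^{2k+6}+7\cdot2^{k+10}-32768$ exactly. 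Your elimination for \eqref{eq 3.8} is also correct: I checked that $127\cdot2^{6n}+\sum_{i=0}^{5}(1-2^{7-i})\Gamma_{i}^{\left[2\atop{\vdots \atop 2}\right]\times 7}$ reproduces every coefficient of \eqref{eq 3.8}, and the identity in fact holds for all $n\ge 0$, not just $n\ge 4$, since the eliminated $\Gamma_{7}$ enters with coefficient $2^{0}-1=0$. The $n=4$ line also closes, because with three unknowns $\Gamma_{6},\Gamma_{7},\Gamma_{8}$ the three moment identities of \eqref{eq 3.11} ($q=0,1,2$) form an invertible Vandermonde system.

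The genuine gap is the $n=5$ line for general $k$. For $k\ge 9$ the unknowns are $\Gamma_{6},\dots,\Gamma_{\min(10,k)}$ --- four or five of them --- while only the $q=0,1,2$ moments are available in \eqref{eq 3.11}. The extra relations you invoke do not exist in the material you are allowed to cite: $R_{3,n}^{(k)}$ and $R_{4,n}^{(k)}$ are nowhere established (the paper's example computes $R_{3,n}^{(9)}$ \emph{from} the $\Gamma_{i}$'s, i.e.\ in the opposite direction), and the ``top-rank counts'' for $n=5$ are equally unavailable. Your method verifies the $n=5$ formula only for $k=7,8$, which is not enough to pin down a cubic in $2^{k}$ --- and asserting a priori that it \emph{is} a cubic in $2^{k}$ is itself an unproved postulate of the same kind the paper makes in Lemma \ref{lem 3.2}. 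So for the $n=5$ case you must either import the result from the quintuple-persymmetric paper (reference [13]), as the author implicitly does, or supply an independent computation of $R_{3,5}^{(k)}$ and $R_{4,5}^{(k)}$; as written, that line of \eqref{eq 3.7} is not proved.
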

\begin{proof}
Lemma \ref{lem 3.1} follows from  Cherly[11,12].
\end{proof}

  \begin{lem}
  \label{lem 3.2}
We postulate that :\\
\begin{align}
\label{eq 3.9}
  \displaystyle  \Gamma_{6}^{\left[2\atop{\vdots \atop 2}\right]\times k} =   \displaystyle 127 \cdot 2^{6n} +[ 651\cdot 2^{k-3}-10605 ] \cdot 2^{5n}+ [\frac{155}{3}\cdot 2^{2k-3} -22661\cdot 2^{k-3}  +\frac{748154}{3}] \cdot 2^{4n} \\
  +  \frac{1}{168}\cdot [2^{3k+3}-16723\cdot2^{2k}+5026378 \cdot 2^{k}-382091648 ]   \cdot 2^{3n} \nonumber \\
   +[ - \frac{1}{3} \cdot 2^{3k} + \frac{5649}{12}\cdot 2^{2k} -\frac{368711}{3}\cdot 2^{k} +8753120 ] \cdot 2^{2n} \nonumber \\
   +[\frac{2}{3} \cdot 2^{3k} - \frac{2437}{3}\cdot 2^{2k} + \frac{597736}{3}\cdot 2^{k} -\frac{41276672}{3}] \cdot 2^{n} \nonumber \\
    -8\cdot [ \frac{1}{21}\cdot 2^{3k}-\frac{163}{3} \cdot 2^{2k}+\frac{38816}{3} \cdot 2^k-\frac{18483200}{21} ] \quad \text{for} \quad k\geqslant 7.  \nonumber
 \end{align}
\end{lem}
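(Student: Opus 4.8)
The plan is to prove \eqref{eq 3.9} by an interpolation argument: first fix the \emph{shape} of $\Gamma_{6}^{\left[2\atop{\vdots \atop 2}\right]\times k}$ as a polynomial in the two quantities $2^{n}$ and $2^{k}$, then check that the finitely many unknown coefficients are forced by the data already assembled in Lemma \ref{lem 3.1}. The input for the shape is Section \ref{sec 3} together with Cherly [11,12]: for $k\geqslant 7$ the count $\Gamma_{6}^{\left[2\atop{\vdots \atop 2}\right]\times k}$ is a polynomial all of whose monomials $2^{an+bk}$ satisfy $0\leqslant b\leqslant 3$ and $a+b\leqslant 6$ (degree $\leqslant 6$ in $2^{n}$ from the rank-$6$ count, degree $\leqslant 3$ in $2^{k}$ and the triangular relation $a+b\leqslant 6$ from the persymmetric column structure analysed in [11,12], and in principle readable off from the integral representation \eqref{eq 3.2}). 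This leaves $22$ unknown coefficients, and establishing precisely these degree and support bounds is the step I expect to be the genuine obstacle; everything afterwards is linear algebra.

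The next step exploits a forced factor. A matrix of the form \eqref{eq 3.1} with $2n\leqslant 4$ has rank at most $4$, so $\Gamma_{6}^{\left[2\atop{\vdots \atop 2}\right]\times k}=0$ for $n=0,1,2$ and every $k$; viewed as a polynomial in $2^{n}$ it therefore vanishes at $2^{n}=1,2,4$, whence
\[
\Gamma_{6}^{\left[2\atop{\vdots \atop 2}\right]\times k}=(2^{n}-1)(2^{n}-2)(2^{n}-4)\,Q(2^{n},2^{k}),
\]
where, by the support bounds above, $Q(x,y)=\sum_{a'+b\leqslant 3}d_{a',b}\,x^{a'}y^{b}$ has only $10$ unknown coefficients $d_{a',b}$.

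These ten coefficients are then pinned down by specialization. Putting $n=3,4,5$ turns the prefactor $(2^{n}-1)(2^{n}-2)(2^{n}-4)$ into $168$, $2520$, $26040$ respectively, so \eqref{eq 3.7} gives $Q(8,2^{k})$, $Q(16,2^{k})$ and $Q(32,2^{k})$ explicitly; equating the coefficients of $2^{3k}$, $2^{2k}$, $2^{k}$ and $1$ produces a linear system that already determines every $d_{a',b}$ with $b\geqslant 1$ (and over-determines those with $b\in\{2,3\}$, furnishing consistency checks). The four coefficients $d_{0,0},d_{1,0},d_{2,0},d_{3,0}$ receive only three equations this way; the missing one comes from \eqref{eq 3.8}, whose coefficient of $2^{6n}$ equals $d_{3,0}$ and hence forces $d_{3,0}=127$. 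After that $n=3,4,5$ fix $d_{0,0},d_{1,0},d_{2,0}$, and multiplying out $(2^{n}-1)(2^{n}-2)(2^{n}-4)\,Q(2^{n},2^{k})$ and regrouping by powers of $2^{n}$ yields exactly \eqref{eq 3.9}.

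Finally one carries out a global check: the polynomial just obtained must specialize at $k=7$ to \eqref{eq 3.8} in full (not merely in top degree in $2^{n}$), and all the over-determined equations coming from $n=3,4,5$ must hold; this simultaneously validates the ansatz and confirms the arithmetic, with further cross-checks available against \eqref{eq 3.6} and against the identities \eqref{eq 3.4}--\eqref{eq 3.5}. The one delicate point throughout remains the justification of the polynomial shape assumed at the outset; if one does not wish to import it wholesale from [11,12], it can be recovered from \eqref{eq 3.2} by expressing $\Gamma_{6}^{\left[2\atop{\vdots \atop 2}\right]\times k}$ in terms of the integrals $R_{q,n}^{(k)}$ and controlling their growth in $q$, $n$ and $k$.
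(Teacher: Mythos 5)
Your proposal is correct and takes essentially the same route as the paper: both arguments exploit the vanishing of $\Gamma_{6}^{\left[2\atop{\vdots \atop 2}\right]\times k}$ at $n=0,1,2$ to factor out $(2^{n}-1)(2^{n}-2)(2^{n}-4)$, then interpolate the cubic cofactor from the $n=3,4,5$ data of Lemma \ref{lem 3.1} together with the $k=7$ leading coefficient from \eqref{eq 3.8}, and both rest on an unproven shape ansatz that the paper itself only ``postulates''. The sole difference is minor: the paper keeps the cofactor's coefficients $\alpha(k),\beta(k),\gamma(k)$ as unknown \emph{functions} of $k$ and solves the exact $3\times 3$ system (so it needs only the assumption that the $2^{6n}$-coefficient is the constant $127$, not polynomiality in $2^{k}$), whereas you additionally posit the triangular support in $2^{k}$ and recover the same answer from an over-determined numerical system with built-in consistency checks.
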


 \begin{proof}
 \begin{align*}
   \text{ From the expression of $ \Gamma_{6}^{\left[2\atop{\vdots \atop 2}\right]\times k} $  in  \eqref{eq 3.8} for k=7  we assume that }\\
 \Gamma_{6}^{\left[2\atop{\vdots \atop 2}\right]\times k} \quad \text{can be written in the form} :\\
   \displaystyle 127 \cdot 2^{6n} + a(k) \cdot 2^{5n}+ b(k) \cdot 2^{4n}+c(k) \cdot 2^{3n} 
 +d(k) \cdot 2^{2n} +e(k) \cdot 2^{n} +f(k)  \\
\text{Set} \;  Y=2^n, \; \text{then}\; \Gamma_{6}^{\left[2\atop{\vdots \atop 2}\right]\times k} =   127\cdot Y^6  +a(k) \cdot Y^5  +b(k) \cdot Y^4+ c(k) \cdot Y^3 + d(k)\cdot Y^2+e(k) \cdot Y +f(k) \\
\Gamma_{6}^{\left[2\atop{\vdots \atop 2}\right]\times k} = 0 \quad  \text{ for}\; n\in\{0,1,2\}\\
\text{Then} \; \Gamma_{6}^{\left[2\atop{\vdots \atop 2}\right]\times k} = (Y-1)(Y-2)(Y-4)[127 \cdot Y^3+\alpha(k)\cdot Y^2+\beta(k) \cdot Y+\gamma(k)] \\
 = [2^{3n}-7\cdot 2^{2n}+14\cdot 2^n-8] \cdot [ 127 \cdot 2^{3n}+\alpha(k) \cdot 2^{2n}+\beta(k) \cdot 2^{n}+\gamma(k)] \\
= 127\cdot 2^{3n}+2^{5n} \cdot [\alpha(k)-7\cdot 127]+2^{4n}\cdot [\beta(k)-7\cdot \alpha(k)+14\cdot 127] \\
+2^{3n}\cdot [\gamma(k)-7\cdot \beta(k)+14\cdot \alpha(k)-8 \cdot 127] +2^{2n}\cdot [-7\cdot \gamma(k)+14\cdot \beta(k)-8 \cdot \alpha(k) ]\\
+2^{n} \cdot [14\cdot \gamma(k)-8\cdot \beta(k)] -8\cdot \gamma(k)\\
 \displaystyle  = 127 \cdot 2^{6n} + a(k) \cdot 2^{5n}+ b(k) \cdot 2^{4n}+c(k) \cdot 2^{3n} 
 +d(k) \cdot 2^{2n} +e(k) \cdot 2^{n} +f(k).  \\
 \end{align*}
 The case n=3 : \\
 \begin{align*}
\Gamma_{6}^{\left[2\atop{\vdots \atop 2}\right]\times k} = (2^3-1)(2^3-2)(2^3-4)[127 \cdot 2^9+\alpha(k)\cdot 2^6+\beta(k) \cdot 2^3+\gamma(k)] \\
= 168\cdot [127 \cdot 2^9+\alpha(k)\cdot 2^6+\beta(k) \cdot 2^3+\gamma(k)] = 2^{3k+3}-7\cdot2^{2k+6}+7\cdot2^{k+10}-32768 \\
 \Rightarrow 127 \cdot 2^9+\alpha(k)\cdot 2^6+\beta(k) \cdot 2^3+\gamma(k) = \frac{1}{168}\cdot[ 2^{3k+3}-7\cdot2^{2k+6}+7\cdot2^{k+10}-32768 ]\\  \Rightarrow     (1) \quad       64\cdot \alpha(k)+8\cdot \beta(k)+\gamma (k)=\frac{1}{21}\cdot[ 2^{3k}-56\cdot2^{2k}+896\cdot2^{k}-2^{12} ] -127 \cdot 2^9. 
 \end{align*}
 The case n=4 :\\
 \begin{align*}
\Gamma_{6}^{\left[2\atop{\vdots \atop 2}\right]\times k} = (2^4-1)(2^4-2)(2^4-4)[127 \cdot 2^{12}+\alpha(k)\cdot 2^8+\beta(k) \cdot 2^4+\gamma(k)]\\
= 2520\cdot [127 \cdot 2^{12}+\alpha(k)\cdot 2^8+\beta(k) \cdot 2^4+\gamma(k)]\\ =120\cdot 2^{3k}+123480\cdot2^{2k}-6142080\cdot2^{k+10}+66170880 \\
\Rightarrow 127 \cdot 2^{12}+\alpha(k)\cdot 2^8+\beta(k) \cdot 2^4+\gamma(k) \\ = \frac{1}{2520}\cdot[ 120\cdot 2^{3k}+123480\cdot2^{2k}-6142080\cdot2^{k+10}+66170880 ] \\
    \Rightarrow     (2) \quad       256\cdot \alpha(k)+16\cdot \beta(k)+\gamma (k)=\frac{1}{21}\cdot[ 2^{3k}+1029\cdot2^{2k}-51184\cdot 2^{k}+551424 ] -127 \cdot 2^{12}. 
    \end{align*}
     The case n=5 :\\
 \begin{align*}
\Gamma_{6}^{\left[2\atop{\vdots \atop 2}\right]\times k} = (2^5-1)(2^5-2)(2^5-4)[127 \cdot 2^{15}+\alpha(k)\cdot 2^{10}+\beta(k) \cdot 2^5+\gamma(k)] \\
= 26040\cdot [127 \cdot 2^{15}+\alpha(k)\cdot 2^{10}+\beta(k) \cdot 2^5+\gamma(k)] \\ =1240\cdot[ 2^{3k}+3199\cdot2^{2k}+2^7\cdot3913\cdot2^{k}-18883\cdot2^{10} \\
\Rightarrow 127 \cdot 2^{15}+\alpha(k)\cdot 2^{10}+\beta(k) \cdot 2^5+\gamma(k)\\ = \frac{1240}{26040}\cdot[ 2^{3k}+3199\cdot2^{2k}+2^7\cdot3913\cdot2^{k}-18883\cdot2^{10} ]  \\
     \Rightarrow     (3) \quad       1024\cdot \alpha(k)+32\cdot \beta(k)+\gamma (k)=\frac{1}{21}\cdot[ 2^{3k}+3199\cdot2^{2k}+2^7\cdot3913\cdot 2^{k}-18883\cdot2^{10} ] -127 \cdot 2^{15}. \\
\text{From (1),(2),(3) we obtain:}\begin{cases}
\alpha(k) = 651\cdot 2^{k-3}-2429 \cdot 2^2. \\
 \beta(k)= \frac{155}{3}\cdot2^{2k-3}-2263\cdot2^{k}+\frac{538784}{3}. \\
 \gamma (k)= \frac{1}{21}\cdot 2^{3k}-\frac{163}{3} \cdot 2^{2k}+\frac{38816}{3} \cdot 2^k-\frac{18483200}{21}. 
 \end{cases}
 \end{align*}
 
  \text{We then deduce :}
  \begin{equation*}
  \begin{cases} 
 \displaystyle a(k) = \alpha(k)-7\cdot 127= 651\cdot 2^{k-3}-10605 \\
  \displaystyle  b(k) = \beta(k)-7\cdot \alpha(k)+14\cdot 127= \frac{155}{3}\cdot 2^{2k-3} -22661\cdot 2^{k-3}  +\frac{748154}{3}\\
  \displaystyle c(k) = \gamma(k)-7\cdot \beta(k)+14\cdot \alpha(k)-8 \cdot 127 = \\
  \frac{1}{168}\cdot [2^{3k+3}-16723\cdot2^{2k}+5026378 \cdot 2^{k}-382091648 ] \\
  \displaystyle d(k) = -7\cdot \gamma(k)+14\cdot \beta(k)-8 \cdot \alpha(k) = -\frac{1}{3} \cdot 2^{3k} + \frac{5649}{12}\cdot 2^{2k} -
  \frac{368711}{3}\cdot 2^{k} +8753120 \\
  \displaystyle e(k) = 14\cdot \gamma(k)-8\cdot \beta(k) = \frac{2}{3} \cdot 2^{3k} - \frac{2437}{3}\cdot 2^{2k} +
  \frac{597736}{3}\cdot 2^{k} -\frac{41276672}{3} \\
  \displaystyle f(k) = -8 \cdot \gamma(k) = -8\cdot [ \frac{1}{21}\cdot 2^{3k}-\frac{163}{3} \cdot 2^{2k}+\frac{38816}{3} \cdot 2^k-\frac{18483200}{21} ]
\end{cases}
    \end{equation*}
 \end{proof}
  \subsection{Computation of $ \Gamma_{i}^{\left[2\atop{\vdots \atop 2}\right]\times 9} for \; 0\leqslant i\leqslant \inf(2n,9)$.}
 We shall need the following Lemma : \\
  \begin{lem}
\label{lem 3.3}
 \begin{equation}
  \label{eq 3.10}
 \begin{cases} 
 \displaystyle  \Gamma_{0}^{\left[2\atop{\vdots \atop 2}\right]\times k}  = 1 \quad \text{if} \quad  k\geqslant 1 \\
\displaystyle  \Gamma_{1}^{\left[2\atop{\vdots \atop 2}\right]\times k}  = (2^{n}-1)\cdot 3 \quad \text{if} \quad  k\geqslant 2 \\
\displaystyle \Gamma_{2}^{\left[2\atop{\vdots \atop 2}\right]\times k} = 7\cdot2^{2n}+(2^{k+1}-25) \cdot 2^{n}-2^{k+1}+18 \quad \text{for} \quad k\geqslant 3\\
\displaystyle  \Gamma_{3}^{\left[2\atop{\vdots \atop 2}\right]\times k} = 15\cdot2^{3n} + (7\cdot2^k-133)\cdot2^{2n}+ (294-21\cdot 2^k) \cdot 2^{n}   -176+14\cdot2^k \quad \text{for} \quad k\geqslant 4\\
\displaystyle  \Gamma_{4}^{\left[2\atop{\vdots \atop 2}\right]\times k} = 31\cdot2^{4n} + \frac{35\cdot2^{k}-1210}{2}\cdot2^{3n}
+ \frac{2^{2k+2}-783\cdot2^{k}+19028}{6}\cdot 2^{2n}\\
\displaystyle +(-2^{2k+1}+269\cdot2^{k}-5744)\cdot2^n 
 +\frac{2^{2k+2}-117\cdot2^{k+2}+9440}{3}  \quad \text{for} \quad k\geqslant 5\\
  \displaystyle  \Gamma_{5}^{\left[2\atop{\vdots \atop 2}\right]\times k} = 63\cdot2^{5n} + (\frac{155}{4}\cdot2^{k}-2573)\cdot2^{4n}
+ (\frac{5}{2}\cdot2^{2k}-\frac{2565}{4}\cdot2^{k}+29150)\cdot2^{3n}\\
\displaystyle +\frac{1}{2}\cdot(-35\cdot2^{2k}+6265\cdot2^{k}-247520)\cdot2^{2n} 
\displaystyle +(35\cdot2^{2k}-5490\cdot2^{k}+203872)\cdot2^{n}\\
-20\cdot2^{2k}+2960\cdot2^{k}-106752  \quad \text{for} \quad k\geqslant 6\\
 \displaystyle  \Gamma_{6}^{\left[2\atop{\vdots \atop 2}\right]\times k} =   127 \cdot 2^{6n} +[ 651\cdot 2^{k-3}-10605 ] \cdot 2^{5n}+ [\frac{155}{3}\cdot 2^{2k-3} -22661\cdot 2^{k-3}  +\frac{748154}{3}] \cdot 2^{4n} \\
  +  \frac{1}{168}\cdot [2^{3k+3}-16723\cdot2^{2k}+5026378 \cdot 2^{k}-382091648 ]   \cdot 2^{3n} \\
   +[ - \frac{1}{3} \cdot 2^{3k} + \frac{5649}{12}\cdot 2^{2k} -\frac{368711}{3}\cdot 2^{k} +8753120 ] \cdot 2^{2n} \\
   +[\frac{2}{3} \cdot 2^{3k} - \frac{2437}{3}\cdot 2^{2k} + \frac{597736}{3}\cdot 2^{k} -\frac{41276672}{3}] \cdot 2^{n}\\
    -8\cdot [ \frac{1}{21}\cdot 2^{3k}-\frac{163}{3} \cdot 2^{2k}+\frac{38816}{3} \cdot 2^k-\frac{18483200}{21} ]    \quad \text{for} \quad k\geqslant 7\\
 \end{cases}
 \end{equation}
 \begin{equation}
 \label{eq 3.11}
  \begin{cases}  
\displaystyle  \sum_{i = 0}^{\inf (2n,k)} \Gamma_{i}^{\left[2\atop{\vdots \atop 2}\right]\times k}  = 2^{(k+1)n}, \\ 
  \displaystyle  \sum_{i = 0}^{\inf (2n,k)} \Gamma_{i}^{\left[2\atop{\vdots \atop 2}\right]\times k} 2^{-i}  = 2^{n+k(n-1)}+2^{(k-1)n}-2^{(k-1)n-k},\\
  \displaystyle \sum_{i = 0}^{\inf (2n,k)} \Gamma_{i}^{\left[2\atop{\vdots \atop 2}\right]\times k} 2^{-2i}  =
   2^{n+k(n-2)}+2^{-n+k(n-2)}\cdot[3\cdot2^k-3] +2^{-2n+k(n-2)}\cdot[6\cdot2^{k-1}-6] \\
   +2^{-3n+kn}-6\cdot2^{n(k-3)-k}+8\cdot2^{-3n+k(n-2)}.
\end{cases}
    \end{equation}
  \end{lem}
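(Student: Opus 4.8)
The plan is to prove Lemma~\ref{lem 3.3} in two independent stages, one for each of the displays \eqref{eq 3.10} and \eqref{eq 3.11}.

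\emph{The closed forms \eqref{eq 3.10}.} The value $\Gamma_{0}^{\left[2\atop{\vdots \atop 2}\right]\times k}=1$ is immediate: the only rank-$0$ matrix of the form \eqref{eq 3.1} is the zero matrix, which corresponds to $t_{1}=\cdots=t_{n}=0$ and lies in $\mathbb{P}^{n}$ as soon as $k\geq 1$. The line for $\Gamma_{1}$ is precisely \eqref{eq 3.6}, and the line for $\Gamma_{6}$ is Lemma~\ref{lem 3.2}. For $i=2,3,4,5$ I would quote the corresponding formulas of Cherly~[11,12]. Alternatively one reproves them by the interpolation device already used for Lemma~\ref{lem 3.2}: for fixed $k$, $\Gamma_{i}^{\left[2\atop{\vdots \atop 2}\right]\times k}$ is a polynomial in $Y=2^{n}$ of degree $i$ with leading coefficient $2^{i+1}-1$, and it vanishes whenever $2n<i$, i.e.\ at $Y=1,2,\dots,2^{\lceil i/2\rceil-1}$, since then the $2n\times k$ matrix has only $2n<i$ rows. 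Dividing out $\prod_{j=0}^{\lceil i/2\rceil-1}(Y-2^{j})$ leaves a quotient polynomial in $Y$ of degree $\lfloor i/2\rfloor$ with leading coefficient $2^{i+1}-1$, whose remaining coefficients are polynomials in $2^{k}$ of bounded degree; these are pinned down by the explicit small-$n$ values (the analogues of Lemma~\ref{lem 3.1}) together with one ``full'' evaluation at a convenient $k$ (the analogue of \eqref{eq 3.8}), exactly the linear system solved in the proof of Lemma~\ref{lem 3.2}.

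\emph{The moment identities \eqref{eq 3.11}.} The first identity is \eqref{eq 3.5}: the $2^{(k+1)n}$ matrices of the form \eqref{eq 3.1}, each of the $n$ persymmetric blocks being free in its entries $\alpha_{1}^{(j)},\dots,\alpha_{k+1}^{(j)}$, are partitioned by rank. The second identity is read off \eqref{eq 3.4} with $q=1$: solving $2^{k-(k-1)n}\sum_{i}\Gamma_{i}^{\left[2\atop{\vdots \atop 2}\right]\times k}2^{-i}=R_{1,n}^{(k)}=2^{2n}+2^{k}-1$ and expanding $2^{(k-1)n-k}(2^{2n}+2^{k}-1)$ gives $2^{n+k(n-1)}+2^{(k-1)n}-2^{(k-1)n-k}$. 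For the third identity I would use \eqref{eq 3.2} with $q=2$, which reads $\sum_{i}\Gamma_{i}^{\left[2\atop{\vdots \atop 2}\right]\times k}2^{-2i}=2^{(k-3)n-2k}R_{2,n}^{(k)}$, and then evaluate $R_{2,n}^{(k)}$ directly from \eqref{eq 3.3}. For $q=2$ the system \eqref{eq 3.3} decouples over the row index $j$, so $R_{2,n}^{(k)}=\sum_{Y_{1},Y_{2}}N(Y_{1},Y_{2})^{n}$, the outer sum over $Y_{1},Y_{2}$ with $\deg Y_{i}\leq k-1$, where $N(Y_{1},Y_{2})$ counts the pairs $(U,V)$ with $\deg U,\deg V\leq 1$ and $Y_{1}U=Y_{2}V$. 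Writing $g=\gcd(Y_{1},Y_{2})$, $Y_{1}=ga$, $Y_{2}=gb$ with $\gcd(a,b)=1$, every solution has the shape $U=bW$, $V=aW$ with $\deg W\leq 1-\max(\deg a,\deg b)$; hence $N=16$ if $Y_{1}=Y_{2}=0$; $N=4$ for the $3(2^{k}-1)$ pairs in which exactly one of $Y_{1},Y_{2}$ is zero or $0\neq Y_{1}=Y_{2}$; $N=2$ for the $6(2^{k-1}-1)$ pairs with $\max(\deg a,\deg b)=1$; and $N=1$ for the remaining $2^{2k}-6\cdot2^{k}+8$ pairs. Summing,
\[
R_{2,n}^{(k)}=2^{4n}+3(2^{k}-1)\,2^{2n}+6(2^{k-1}-1)\,2^{n}+2^{2k}-6\cdot2^{k}+8 ,
\]
and multiplying by $2^{(k-3)n-2k}$ and regrouping reproduces the third line of \eqref{eq 3.11}.

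\emph{Main obstacle.} The algebra is routine; the two delicate points are: (i) for \eqref{eq 3.10}, checking that the data imported from Cherly~[11,12] and from Lemmas~\ref{lem 3.1}--\ref{lem 3.2} is both consistent and just sufficient to determine every $2^{k}$-dependent coefficient of the quotient polynomials; and (ii) for the third identity of \eqref{eq 3.11}, getting the stratum counts exactly right — in particular that there are precisely six coprime ordered pairs $(a,b)$ with $\max(\deg a,\deg b)=1$, each admitting $2^{k-1}-1$ admissible $g$, so that the complementary stratum has $(2^{k}-1)(2^{k}-2)-6(2^{k-1}-1)=2^{2k}-6\cdot2^{k}+8$ pairs. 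Once $R_{1,n}^{(k)}$ and $R_{2,n}^{(k)}$ are in hand the three identities of \eqref{eq 3.11} follow by substitution into \eqref{eq 3.2}.
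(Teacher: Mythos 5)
Your proposal is correct, and it is in fact more substantial than the paper's own proof, which consists of the single line ``Lemma 3.3 follows from Lemma 6.3 in Cherly [12] and \eqref{eq 3.9}.'' For the closed forms \eqref{eq 3.10} you and the paper do the same thing: import $i\leq 5$ from [11,12] and take $i=6$ from Lemma~\ref{lem 3.2}; your alternative interpolation sketch is a faithful generalisation of the proof of Lemma~\ref{lem 3.2}, though note that its inputs (polynomiality in $Y=2^{n}$ of degree $i$ with leading coefficient $2^{i+1}-1$, plus the small-$n$ and fixed-$k$ data) are exactly the postulated/imported facts of this series of papers, so it does not remove the dependence on [11,12]. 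Where you genuinely add value is \eqref{eq 3.11}: the paper gives no derivation, whereas you obtain the first two identities from \eqref{eq 3.5} and \eqref{eq 3.4} and the third by actually computing $R_{2,n}^{(k)}$ via the decoupling over the row index and the gcd stratification $Y_{1}=ga$, $Y_{2}=gb$. I checked your stratum counts ($1$ pair with $N=16$, $3(2^{k}-1)$ with $N=4$, $6(2^{k-1}-1)$ with $N=2$, $2^{2k}-6\cdot2^{k}+8$ with $N=1$, summing to $2^{2k}$) and the exponent bookkeeping after multiplying by $2^{(k-3)n-2k}$; both reproduce the stated right-hand side exactly. This self-contained computation of the second moment is the one piece of the lemma the paper leaves entirely to the external reference, and your argument supplies it correctly.
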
  
 \begin{proof}
 Lemma 3.3 follows from Lemma 6.3 in Cherly [12] and \eqref{eq 3.9}
 \end{proof}
 We deduce from \eqref{eq 3.10} and \eqref{eq 3.11}  with k=9.
 
    \begin{equation}
  \label{eq 3.12}
  \Gamma_{i}^{\left[2\atop{\vdots \atop 2}\right]\times 9} = \begin{cases}
1 & \text{if  } i = 0,        \\
 (2^{n}-1)\cdot 3 & \text{if   } i=1,\\
 7\cdot2^{2n}+999 \cdot 2^{n}-1006  & \text{if   }  i = 2,  \\
 15\cdot 2^{3n}+3451\cdot 2^{2n}
-10458\cdot 2^{n}+6992  & \text{if   }  i = 3,  \\
 31\cdot 2^{4n} +8355\cdot 2^{3n}+111118 \cdot 2^{2n}
-392304 \cdot2^{n}+272800 & \text{if   }  i=4, \\ 
 63 \cdot 2^{5n}+17267 \cdot2^{4n} +356190 \cdot2^{3n}
 -3107440 \cdot 2^{2n}+6568032 \cdot 2^n -3834112 & \text{if   }  i=5, \\
 127\cdot 2^{6n}+31059\cdot2^{5n}+ 492094\cdot 2^{4n}  -6658800\cdot 2^{3n}\\
 +24491488 \cdot 2^{2n}-35215104\cdot2^{n} +16859136 & \text{if   }  i=6, \\
  255\cdot 2^{7n}  +a_{6}^{(7)}\cdot2^{6n}  +a_{5}^{(7)}\cdot2^{5n}+ a_{4}^{(7)}\cdot 2^{4n}\\ +
 a_{3}^{(7)}\cdot 2^{3n}+a_{2}^{(7)} \cdot 2^{2n}+ a_{1}^{(7)}\cdot2^{n} +a_{0}^{(7)}& \text{if   }  i=7, \\
 511\cdot2^{8n} +a_{7}^{(8)}\cdot2^{7n}  +a_{6}^{(8)}\cdot2^{6n}  +a_{5}^{(8)}\cdot2^{5n}+ a_{4}^{(8)}\cdot 2^{4n} \\ +
 a_{3}^{(8)}\cdot 2^{3n}+a_{2}^{(8)} \cdot 2^{2n}+ a_{1}^{(8)}\cdot2^{n} +a_{0}^{(8)}& \text{if   }  i=8, \\
 2^{10n} +a_{8}^{(9)}\cdot2^{8n} +a_{7}^{(9)}\cdot2^{7n}  +a_{6}^{(9)}\cdot2^{6n}  +a_{5}^{(9)}\cdot2^{5n}+ a_{4}^{(9)}\cdot 2^{4n} \\ +
 a_{3}^{(9)}\cdot 2^{3n}+a_{2}^{(9)} \cdot 2^{2n}+ a_{1}^{(9)}\cdot2^{n} +a_{0}^{(9)}& \text{if   }  i=9. \\
 \end{cases}    
  \end{equation}
where, \\
 \begin{equation}
  \label{eq 3.13}
 \begin{cases} 
 \displaystyle  \sum_{i = 0}^{9} \Gamma_{i}^{\left[2\atop{\vdots \atop 2}\right]\times 9}  = 2^{10n}, \\ 
  \displaystyle  \sum_{i = 0}^{9} \Gamma_{i}^{\left[2\atop{\vdots \atop 2}\right]\times 9} 2^{9-i}  = 2^{10n}+511\cdot 2^{8n},\\
  \displaystyle \sum_{i = 0}^{9} \Gamma_{i}^{\left[2\atop{\vdots \atop 2}\right]\times 9} 2^{18-2i}  =
   2^{10n}+1533\cdot2^{8n} +1530\cdot2^{7n} +259080\cdot2^{6n}. 
\end{cases}
    \end{equation}
 Combining \eqref{eq 3.12} and \eqref{eq 3.13} we compute $a_{i}^{(j)}$ in \eqref{eq 3.12}  for $7\leqslant j\leqslant 9 , \; 0 \leqslant i \leqslant  j-1 $\\
 and we obtain from  \eqref{eq 3.12}  \\
    \begin{equation}
  \label{eq 3.14}
  \Gamma_{i}^{\left[2\atop{\vdots \atop 2}\right]\times 9} = \begin{cases}
1 & \text{if  } i = 0,        \\
 (2^{n}-1)\cdot 3 & \text{if   } i=1,\\
 7\cdot2^{2n}+999 \cdot 2^{n}-1006  & \text{if   }  i = 2,  \\
 15\cdot 2^{3n}+3451\cdot 2^{2n}
-10458\cdot 2^{n}+6992  & \text{if   }  i = 3,  \\
 31\cdot 2^{4n} +8355\cdot 2^{3n}+111118 \cdot 2^{2n}
-392304 \cdot2^{n}+272800 & \text{if   }  i=4, \\ 
 63 \cdot 2^{5n}+17267 \cdot2^{4n} +356190 \cdot2^{3n}
 -3107440 \cdot 2^{2n}+6568032 \cdot 2^n -3834112 & \text{if   }  i=5, \\
 127\cdot 2^{6n}+31059\cdot2^{5n}+ 492094\cdot 2^{4n}  -6658800\cdot 2^{3n}\\
 +24491488 \cdot 2^{2n}-35215104\cdot2^{n} +16859136 & \text{if   }  i=6, \\
  255\cdot 2^{7n}  +42291\cdot2^{6n}  -219618\cdot 2^{5n}-4053808\cdot 2^{4n}\\ +
 32840160\cdot 2^{3n}-82168576 \cdot 2^{2n}+ 81543168\cdot2^{n} -27983872& \text{if   }  i=7, \\
   511\cdot2^{8n} -765\cdot 2^{7n}  -127762\cdot2^{6n}  +440496\cdot2^{5n}+ 8456800\cdot 2^{4n} \\ 
57511680 \cdot 2^{3n}+118013952\cdot 2^{2n}-83951616\cdot 2^{n} +14680064 & \text{if   }  i=8, \\
 2^{10n} -511\cdot 2^{8n}+ 510\cdot 2^{7n}  +85344\cdot 2^{6n}  -252000\cdot 2^{5n}-4912384\cdot 2^{4n} \\ +
 30965760\cdot 2^{3n}-57344000 \cdot 2^{2n}+ 31457280 \cdot 2^{n} & \text{if   }  i=9. \\
  \end{cases}    
  \end{equation}
 We recall the similar results obtained in Cherly [11,12] for $1\leqslant k\leqslant 8 $\\
 
  \textbf{The case k=1}
  \begin{equation*}
       \Gamma_{i}^{\left[2\atop{\vdots \atop 2}\right]\times 1} =  \begin{cases}
1 & \text{if  } i = 0,        \\
   2^{2n} -1& \text{if   } i=1.
 \end{cases}    
   \end{equation*}

\textbf{The case k=2}

  \begin{equation*}
       \Gamma_{i}^{\left[2\atop{\vdots \atop 2}\right]\times 2} =  \begin{cases}
1 & \text{if  } i = 0,        \\
   (2^{n}-1)\cdot 3 & \text{if   } i=1,\\
2^{  3n} - 3\cdot 2^{n} +2 & \text{if   }  i=2. 
    \end{cases}    
   \end{equation*}

\textbf{The case k=3}

    \begin{equation*}
       \Gamma_{i}^{\left[2\atop{\vdots \atop 2}\right]\times 3} =  \begin{cases}
1 & \text{if  } i = 0,        \\
   (2^{n}-1)\cdot 3 & \text{if   } i=1,\\
  7\cdot 2^{2n} -9\cdot2^{n}+2 & \text{if   }  i = 2,  \\
2^{  4n} - 7\cdot 2^{2n} +6\cdot2^{n}  & \text{if   }  i=3. 
    \end{cases}    
   \end{equation*}

\textbf{The case k=4}

       \begin{equation*}
      \Gamma_{i}^{\left[2\atop{\vdots \atop 2}\right]\times 4}=   \begin{cases}
1 & \text{if  } i = 0,        \\
 (2^{n}-1)\cdot 3 & \text{if   } i=1,\\
7\cdot2^{2n}+7\cdot2^{n}-14 & \text{if   }  i = 2,  \\
 15\cdot 2^{3n}-21\cdot 2^{2n}-42\cdot 2^{n}+48   & \text{if   }  i = 3,  \\
 2^{5n} -15\cdot 2^{3n}+7\cdot 2^{2n+1}+2^{n+5}-32 & \text{if   }  i=4. 
    \end{cases}    
   \end{equation*}

\textbf{The case k=5}
  \begin{equation*}
    \Gamma_{i}^{\left[2\atop{\vdots \atop 2}\right]\times 5}=   \begin{cases}
1 & \text{if  } i = 0,        \\
 (2^{n}-1)\cdot 3 & \text{if   } i=1,\\
7\cdot2^{2n}+39 \cdot 2^{n}-46  & \text{if   }  i = 2,  \\
 15\cdot 2^{3n}+91\cdot 2^{2n}-189\cdot 2^{n+1}+272   & \text{if   }  i = 3,  \\
31\cdot 2^{4n} -45\cdot 2^{3n}-161\cdot 2^{2n+1}+51\cdot2^{n+4}-480 & \text{if   }  i=4, \\                                                                                                
2^{6n} - 31\cdot 2^{4n} +15\cdot 2^{3n+1}+7\cdot 2^{2n+5}-15\cdot2^{n+5}+256   & \text{if   }  i=5. 
  \end{cases}    
 \end{equation*}

\textbf{The case k=6}

  \begin{equation*}
    \Gamma_{i}^{\left[2\atop{\vdots \atop 2}\right]\times 6}=   \begin{cases}
1 & \text{if  } i = 0,        \\
 (2^{n}-1)\cdot 3 & \text{if   } i=1,\\
7\cdot2^{2n}+103 \cdot 2^{n}-110  & \text{if   }  i = 2,  \\
 15\cdot 2^{3n}+315\cdot 2^{2n}-1050\cdot 2^{n}+720   & \text{if   }  i = 3,  \\
31\cdot 2^{4n} +515\cdot 2^{3n}-2450 \cdot 2^{2n}+3280 \cdot2^{n}-1376  & \text{if   }  i=4, \\                                                                                                
   63 \cdot 2^{5n}-93 \cdot2^{4n} -1650 \cdot2^{3n}+5040 \cdot 2^{2n}-4128 \cdot 2^n +768 & \text{if   }  i=5, \\
      2^{7n}-63 \cdot 2^{5n}+62\cdot 2^{4n}+1120\cdot 2^{3n}-2912 \cdot 2^{2n}+1792 \cdot2^{n}   & \text{if   }  i=6. 
  \end{cases}    
  \end{equation*}

  \textbf{The case k=7}

   \begin{equation*}
   \Gamma_{i}^{\left[2\atop{\vdots \atop 2}\right]\times 7}=  \begin{cases}
1 & \text{if  } i = 0,        \\
 (2^{n}-1)\cdot 3 & \text{if   } i=1,\\
7\cdot2^{2n}+231 \cdot 2^{n}-238  & \text{if   }  i = 2,  \\
15\cdot 2^{3n}+763\cdot 2^{2n}-2394\cdot 2^{n}+1616   & \text{if   }  i = 3,  \\
 31\cdot 2^{4n} +1635\cdot 2^{3n}-2610 \cdot 2^{2n}\\
 -4080 \cdot2^{n}+5024  & \text{if   }  i=4, \\  
63 \cdot 2^{5n}+2387 \cdot2^{4n} -11970 \cdot2^{3n}\\
-9520 \cdot 2^{2n}+74592 \cdot 2^n -55552 & \text{if   }  i=5, \\
    127\cdot 2^{6n}-189\cdot2^{5n} -7378\cdot 2^{4n}\\
    +24240\cdot 2^{3n}+35168 \cdot 2^{2n}-166656 \cdot2^{n} +114688 & \text{if   }  i=6, \\
   2^{8n}- 127\cdot2^{6n}+126\cdot 2^{5n}+4960\cdot 2^{4n}\\
   -13920\cdot 2^{3n}-23808 \cdot 2^{2n}+98304 \cdot2^{n}  -65536 & \text{if   }  i=7.    
  \end{cases}    
  \end{equation*}
  
\textbf{The case k=8}
  \begin{equation*}
  \label{eq 6.14}
  \Gamma_{i}^{\left[2\atop{\vdots \atop 2}\right]\times 8} = \begin{cases}
1 & \text{if  } i = 0,        \\
 (2^{n}-1)\cdot 3 & \text{if   } i=1,\\
 7\cdot2^{2n}+487 \cdot 2^{n}-494  & \text{if   }  i = 2,  \\
15\cdot 2^{3n}+1659\cdot 2^{2n}\\
-5082\cdot 2^{n}+3408   & \text{if   }  i = 3,  \\
 31\cdot 2^{4n} +3875\cdot 2^{3n}+13454 \cdot 2^{2n}\\
-67952 \cdot2^{n}+50592  & \text{if   }  i=4, \\  
 63 \cdot 2^{5n}+7347 \cdot2^{4n} +28830 \cdot2^{3n}\\
 -468720 \cdot 2^{2n}+1092192 \cdot 2^n -659712 & \text{if   }  i=5, \\
127\cdot 2^{6n}+10227\cdot2^{5n} -52514\cdot 2^{4n}\\
-339760\cdot 2^{3n}+2548448 \cdot 2^{2n}-4804352 \cdot2^{n} +2637824& \text{if   }  i=6, \\
    255\cdot 2^{7n}-381\cdot2^{6n}-31122\cdot2^{5n} \\
    +105648\cdot 2^{4n}+758880\cdot 2^{3n}-4617984 \cdot 2^{2n}+7913472 \cdot2^{n} -4128768 & \text{if   }  i=7, \\
2^{9n}- 255\cdot 2^{7n} +254\cdot2^{6n}+20832\cdot 2^{5n}\\
-60512\cdot 2^{4n}-451840\cdot 2^{3n}+2523136 \cdot 2^{2n}-4128768 \cdot2^{n}  +2097152 & \text{if   }  i=8.    
  \end{cases}    
  \end{equation*}

  \begin{example}
  \textbf{Computation of  $ R_{q,n}^{(k)} $ in the case k=9, q=3\; (see \eqref{eq 3.2} and \eqref{eq 3.3}) }
     The number denoted by $ R_{3,n}^{(9)} $ of solutions \\
  
 $(Y_1,U_{1}^{(1)},U_{2}^{(1)}, \ldots,U_{n}^{(1)}, Y_2,U_{1}^{(2)},U_{2}^{(2)}, 
\ldots,U_{n}^{(2)}, Y_3,U_{1}^{(3)},U_{2}^{(3)}, \ldots,U_{n}^{(3)}   ) \in (\mathbb{F}_{2}[T])^{3(n+1)}$ \vspace{0.5 cm}\\
 of the polynomial equations  \vspace{0.5 cm}
  \[\left\{\begin{array}{c}
 Y_{1}U_{1}^{(1)} + Y_{2}U_{1}^{(2)} + Y_{3}U_{1}^{(3)} = 0  \\
    Y_{1}U_{2}^{(1)} + Y_{2}U_{2}^{(2)}  + Y_{3}U_{2}^{(3)} = 0\\
    \vdots \\
   Y_{1}U_{n}^{(1)} + Y_{2}U_{n}^{(2)} + Y_{3}U_{n}^{(3)} = 0 
 \end{array}\right.\]
 
  satisfying the degree conditions \\
                   $$  degY_i \leq 8,
                   \quad degU_{j}^{(i)} \leq 1, \quad  for \quad 1\leq j\leq n,   \quad 1\leq i \leq 3 $$ \\
  is equal to

  \begin{align*}
 R_{q,n}^{(k)}  =  2^{q(2n+k) - (k+1)n}\sum_{i = 0}^{\inf(2n,k)}
  \Gamma_{i}^{\left[2\atop{\vdots \atop 2}\right]\times k} 2^{-iq} = R_{3,n}^{(9)} =  
   2^{27-4n}\sum_{i = 0}^{9} \Gamma_{i}^{\left[2\atop{\vdots \atop 2}\right]\times 9} 2^{-i3}\\
    = 2^{27-4n}\cdot \bigg[ 2^{-27}\cdot 2^{10n} \\+(-511\cdot 2^{-27}+511\cdot 2^{-24})\cdot2^{8n}\\
  +(510\cdot 2^{-27}-765\cdot 2^{-24}+255\cdot 2^{-21})\cdot2^{7n}\\
  +(85344\cdot2^{-27}-127762\cdot 2^{-24}  + 42291\cdot 2^{-21}+127\cdot 2^{-18})\cdot2^{6n}\\
  +(-252000\cdot2^{-27}+440496\cdot2^{-24}  -219618\cdot2^{-21}+31059\cdot2^{-18}+63\cdot2^{-15})\cdot2^{5n}\\ 
 +(-4912384\cdot2^{-27}+8456800\cdot2^{-24}  -4053808\cdot2^{-21}\\
 +492094\cdot2^{-18}+17267\cdot2^{-15}+31\cdot2^{-12})\cdot2^{4n}\bigg] \\
 = 2^{6n}+3577\cdot 2^{4n}+10710\cdot 2^{3n}+1834896 \cdot 2^{2n}+5376672\cdot 2^{n}+126991872.
  \end{align*}
 \textbf{ The case n=1:}\\
 
  $  R_{3,1}^{(9)}= 2^{6}+3577\cdot 2^{4}+10710\cdot 2^{3}+1834896 \cdot 2^{2}+5376672\cdot 2+126991872 =145227776. $
  
  \textbf{ The case n=2:}\\
 
  $  R_{3,2}^{(9)}= 2^{12}+3577\cdot 2^{8}+10710\cdot 2^{6}+1834896 \cdot 2^{4}+5376672\cdot 2^2+126991872 =179462144. $

  \textbf{ The case n=3:}\\
 
  $  R_{3,3}^{(9)}= 2^{18}+3577\cdot 2^{12}+10710\cdot 2^{9}+1834896 \cdot 2^{6}+5376672\cdot 2^3+126991872 =307835648. $
    \end{example}

  \newpage

 \end{document}